\begin{document}

\title{A New Global Optimization Method Based on Simplex Branching for Solving a Class of Non-Convex QCQP Problems
}


\author{Bo Zhang  \and YueLin Gao   \and  Xia Liu \and XiaoLi Huang
}


\institute{ *Corresponding Author:  Bo Zhang \at
             School of Civil and Hydraulic Engineering, Ningxia University, Yinchuan 750021, PR China;
                School of Mathematics and Statistics, Ningxia University, Yinchuan, 750021, PR China\\
                  Tel.: +86-187-0960-2837\\
                \email{zbsdx121@163.com}
           \and
          YueLin Gao \at
              Ningxia province key laboratory of intelligent information and data processing, North Minzu University, Yinchuan, 750021, PR China; School of mathematics and information science, North Minzu University, Yinchuan, 750021, PR China\\
               \email{gaoyuelin@263.net}
                   \and
               Xia Liu \at
                School of Mathematics and Statistics, Ningxia University, Yinchuan, 750021, PR China\\
                \email{lingxiaoyu911@163.com} \and
               XiaoLi Huang \at
              School of mathematics and information science, North Minzu University, Yinchuan, 750021, PR China\\
               \email{hxl1569501@163.com}}

\date{Received: date / Accepted: date}

\maketitle

\begin{abstract}
Quadratic constrained quadratic programming problems often occur in various fields such as engineering practice, management science, and network communication.
This article mainly studies a non convex quadratic programming problem with convex quadratic constraints. Firstly, based on our existing results, the problem is reconstructed as an equivalent problem with a simple concave quadratic objective function in the result space, with a convex feasible domain. A global optimization algorithm for solving equivalent problems is proposed based on a branch and bound framework that can ensure the global optimality of the solution. This algorithm combines effective relaxation processes with branching processes related to new external approximation techniques. Finally, the theoretical feasibility of the algorithm was analyzed.
\keywords{Global optimization \and Quadratic  program \and Branch-and-bound\and Outcome space}
\subclass{90C20 \and 90C26 \and 90C30}
\end{abstract}

\section{Introduction}
Consider the following class of quadratically constrained quadratic programming (QCQP) problems  defined as
$$
({\rm QCQP})\left\{
\begin{aligned}
&\min ~\phi(x)=x^TQx+q^Tx\\
&~{\rm s.t.}~~x\in \mathcal{X},
\end{aligned}
\right.$$
where
$\mathcal{X}=\left\{x\in \mathbb{R}^n|Ax\leq b,x\geq 0,x^TQ_ix+q_i^Tx\leq d_i,i=1,2,\cdots,p\right\}$
is assumed as a bounded closed set, and we also assume the Slater condition
on $\mathcal{X}$, i.e., there is at least one interior point here within  $\mathcal{X}$;
$Q\in \mathbb{R}^{ n\times n}$ is an indefinite symmetric matrix with $r(r\leq n)$ negative eigenvalues,
$Q_i(i=1,2,\cdots,p)\in \mathbb{R}^{n\times n}$ are positive semidefinite matrices, $q\in \mathbb{R}^n$, $q_i\in \mathbb{R}^n$, $d_i\in \mathbb{R}(i=1,2,\cdots,p)$, $A\in \mathbb{R}^{m\times n}$ and $b\in \mathbb{R}^{m}$; $T$ represents the transpose of a vector (or matrix)(for example, $q_i^T$ represents the transpose of a vector $q_i$), $\mathbb{R}^n$ represents the set of all $n$-dimensional real vectors.

Problem QCQP is a common non-convex programming problem, which is widely used in practical fields such as economics, applied science, portfolio analysis, support vector machine, structural analysis, optimal control and engineering.
For a more general introduction to QCQP, we can refer to  \cite{Floud1995217,Gould2002149,Pardalos1991115} and the references cited therein.
Moreover, many other nonlinear optimization problems are special cases of such problems or can be transformed into this form, such as  celis-dennis-tapia problem \cite{Bomze2015459}, generalized trust domain problem \cite{Fortin2004041,Jeyakumar2014147}, standard quadratic programming problem \cite{Bomze2008115,Bomze2000301},  box constrained quadratic programming problem \cite{Vandenbussche2005531}, bilinear programming problem \cite{Vaish2010303} and linear multiplicative
programming (LMP) problem \cite{ShenP2020453}.
For QCQP, if the objective function is also convex, the problem is polynomial solvable with a given accuracy (see \cite{Lobo1998193}).
Nevertheless, QCQP is known to be NP-hard because its objective function is non-convex, even if the matrix $Q$ has only one negative eigenvalue \cite{Pardalos1991115}.
At present, only a few special subclass problems with QCQP can be determined as polynomial-time solvable \cite{Kim2003143,Zhang2000453,BurerSYY2020117,Azuma202282}.
Therefore, the widespread application of QCQP and the computational challenges it brings have attracted the attention of researchers in various disciplines \cite{Floud1995217,Pardalos1991115}, for which a lot of algorithms and theories have been reported in the literature.
Furthermore,   QCQP usually has multiple local optimal solutions, and traditional optimization methods can only find its local solutions.
However, many practical problems need to find their global optimal solutions or approximate global optimal solutions, so it is necessary to develop a global optimization algorithm for solving QCQP.
Branch-and-bound (B\&B) algorithm  \cite{Horst2000Chapter3} is a typical enumeration method that can solve QCQP  globally.
The most important part of this approach is a strategy that can estimate the lower (or upper) bound of each enumeration. As in \cite{Burer2008259,Vandenbussche2005559,JiaoH2015973,Bomze2002117}, various relaxations have been adopted to estimate the lower bound of each enumeration.
Among them, the semidefinite programming (SDP) relaxation has attracted widespread attention over the past few decades (see \cite{Bao2011129}  for a recent review) because of the polynomial time solvability \cite{Ye1997Interior} of SDP when the interior point method is adopted. In fact, under the Shor scheme \cite{Lemarechal2001119,Shor1987731}, the classical lagrangian relaxation is the same as SDP relaxation. Another widely used relaxation method is linear programming relaxation, such as reformulation linearization technique (RLT, see \cite{Sherali1990411}).
Besides,  a relatively new approach is to reconstruct the quadratic programming problem into a cone programming problem with copositive cone constraints \cite{Bomze2000301,Bomze2012509,Deng2013229} or set-semidefinite cone constraints \cite{Arima20132320,Lu20111475,Sturm2003246}, and then to approximate these cones by some computationally tractable cones.
However, a classical literature \cite{Chen20123352} shows that even for non-convex quadratic programming problems with linear constraints,
 these B\&B-based global solvers are capable of solving many small-scale problem instances, but may run out of time (or memory) and return only suboptimal solutions when the number of decision variables  ranges from 50 to 100.
 Inspired by such observations, Luo et al.\cite{Hezhi2018119} considered the large-scale QCQP with multiple negative eigenvalues, and treated its objective function as a special form of D.C. program. Then the objective function can be rewritten as:
$$
f(x)
=x^TQ_+x-x^TQ_-x+q^Tx
=x^TQ_+x+q^Tx-x^TC^TCx,
$$
 where $Q_+$ and $Q_-$ are positive semidefinite matrices obtained by the eigenvalue decomposition of  $Q$; $C\in \mathbb{R}^{r\times n}$ is a matrix whose every row $C_i\in \mathbb{R}^{n}$ is obtained by multiplying the square root of the absolute value of the $i$th negative eigenvalue of $Q$ by its corresponding eigenvector.
  Luo et al.  equivalently converted QCQP into the%
following optimization problem:
 \begin{equation}\label{eq1}
\min_{(x,t)\in \mathcal{X}}\left\{x^TQ_+x+q^Tx-\|Cx\|^2\right\}.
\end{equation}
 By adding bounds to the variables $t_i~(i=0,1,\cdots,r)$ in (\ref{eq1}), Zhang et al. \cite{ZBO2023} finally consider the following optimization
 problem:
 \begin{equation}\label{eqj2}
 \min_{y\in \mathcal{Y}}\left\{\nu(y):=y_0+\sum_{i=1}^r(-(y_i)^2)\right\},
\end{equation}
where $$\mathcal{Y}:=\{y\in \mathbb{R}^{r+1}:f_0(x)\leq y_0\leq y'_0,f_i(x)= y_i,i=1,2,\cdots,r ~for~some~x\in \mathcal{X}\}$$
with $y'_0\geq\max\limits_{x\in
\mathcal{X}}f_0(x)$, $f_0(x)=x^TQ_+x+q^Tx$, $f_i(x)=C_i^{\top}x$ for $i=1,2,\cdots,r$.
In view of (\ref{eqj2}), Zhang et al. developed a  rectangular B\&B algorithm, whose linear relaxation combines a new outer approximation technique and a special RLT.

In this paper, we continue to consider (\ref{eqj2}) and propose a new convex relaxation technique based on a simplex rather than a rectangle.

By projecting QCQP onto the result space, we obtain the equivalent problem of minimizing concave quadratic functions (all variables are in the result space) on a compact convex set. Then, based on the branching operation on the simplex, we propose a novel outer approximation technique and integrate it into the branch and bound framework to develop a new result space global optimization algorithm to solve the EP problem. In each iteration of the algorithm, only one convex quadratic programming problem and two small linear programming problems are solved, and only one new semi infinite linear inequality constraint is generated to minimize the computational complexity of the subproblems. Meanwhile, there always exists a decomposition between $n$  original decision variables and $r+1$ additional introduced variables, such that the number of variables for each subproblem solved by the algorithm is $n$ or $r+2$, rather than $n+r+1$.

The overall theory of the algorithm indicates that it is feasible, and when the number of negative eigenvalues r of matrix Q is very small, it is found that our algorithm may save more computation than the algorithm in \cite{ZBO2023} . Mainly due to the significantly fewer linear semi infinite constraints in the outer approximation space. Of course, our algorithm does not require parameter selection steps compared to the algorithm in \cite{ZBO2023}. This greatly saves the difficulty of adjusting parameters.

The rest of this article is structured as follows. In Section 2, an equivalent problem EP based on outer space and its novel linear relaxation problem were developed for problem QCQP. Section 3 is an analysis of the convergence of algorithm theory, which mainly includes three main operations (branching, bounding, pruning) and branch and bound algorithms. Finally, we summarize the paper in Section 4 and discuss some future research directions.


\section{Relaxation}
The main purpose of this section is to construct a simplex based relaxation strategy by utilizing (\ref{eqj2}). To this end, the related notions  are given first.

For an $r$-simplex $S$ with vertices $v^1,\cdots,v^{r+1}\in\mathbb{R}^m$, we denote the sets
of vertices and edges of $S$ by $V(S) = \{v^1,\cdots,v^{r+1}\}$ and $E(S) = \{\{v^i, v^j\} : i, j =1,\cdots, r+1,i<j\}$, respectively. Let $d(S) = \max\{\|v-w\|:\{v, w\}\in E(S)\}$ be the
diameter of $S$. $e_r=(1,\cdots, 1)^T\in \mathbb{R}^r$. The Euclidean norm of a vector $x\in \mathbb{R}^n$ is denoted by $\|x\|$. The standard $r$-simplex is denoted by
 $$\triangle_{r}=\left\{w\in \mathbb{ R}^{r+1}\bigg| \sum_{i=1}^{r+1}w_i=1,
w_i\geq0,i=1,\cdots,r+1\right\}.$$

The   global optimal solution $y^*$ of (\ref{eqj2}) clearly lie in the rectangle $\overline{\mathcal{H}}$, where
$$
\overline{\mathcal{H}}:=[\underline{y}^0,\overline{y}^0]=[\underline{y}_0, \overline{y}_0]\times\mathcal{H},~
 \mathcal{H}:=\prod\limits_{i=1}^r[\underline{y}_i, \overline{y}_i],~\underline{y}_0:=\min_{x\in \mathcal{X}}f_0(x),$$ $$ \overline{y}_0:=y_0',~\underline{y}_i:=\min_{x\in \mathcal{X}}f_i(x),~\overline{y}_i:=\max_{x\in \mathcal{X}}f_i(x),~i= 1,\cdots,r. $$
As described in  \cite{ZBO2023}, $\overline{\mathcal{H}}$ contains $\mathcal{Y}$ and also provides a rough initial outer approximation for $\mathcal{Y}$.

Let $S_0$ be a non-degenerate $r$-simplex cover of $ \mathcal{H}$. Then problem (\ref{eqj2}) can be equivalently converted into
 \begin{equation}\label{eqj3}
 \min_{y\in \mathcal{Y}\cap \bar{S}_0}\left\{\nu(y):=y_0+\sum_{i=1}^r(-(y_i)^2)\right\},
\end{equation}
where $\bar{S}_0=[\underline{y}_0, \overline{y}_0]\times S_0$.
 The problem (\ref{eqj3}) is obviously equivalent to the problem QCQP, and  $\{x\in \mathbb{R}^{n}:y^*=f(x),~x\in \mathcal{X}\}$ is a global  optimal solution set for QCQP (see Theorem 1 in  \cite{ZBO2023}).  Of course, it should be noted that (\ref{eqj3}) is a concave minimization problem with concave objective function and convex feasible region. It can be concluded that the optimal solution is located on the boundary of $\mathcal{Y}$, i.e.,  $y^*\in \partial \mathcal{Y}$.

Next, for any nondegenerate simplex $S\subseteq S_0$ with
$V(S)= \{v^1,\cdots,v^{r+1}\}$, $v^j=(y^j_1,y^j_2,\cdots,y^j_r)^{\top}\in \mathbb{R}^r$, $j=1,2,\cdots,r+1$, the subproblem of  problem  (\ref{eqj3}) over $S$ can be formalized as
 \begin{equation}\label{eqjz3}
 \min_{y\in \mathcal{Y}\cap \bar{S}}\left\{\nu(y):=y_0+\sum_{i=1}^r(-(y_i)^2)\right\},
\end{equation}
with $\bar{S}=[\underline{y}_0, \overline{y}_0]\times S$.
 Meanwhile, we consider the following convex optimization problem
 $$
({\rm CP}_{\lambda}):~
 \min\limits_{ x\in \mathcal{X}}~\left\{f_0(x)-2\sum_{i=1}^{r}\lambda_if_i(x)
 \right\},$$
where $\lambda:=(\lambda_1,\lambda_2,\cdots,\lambda_r)^{\top}
\in \mathbb{R}^{r}$ is the given parameter.
Let $x_{\lambda}$ and $\mu_{\lambda}$ be the optimal value and optimal solution of the problem ${\rm CP}_{\lambda}$, respectively, and let $y_{\lambda}:=(f_0(x_{\lambda}), \cdots,f_r(x_{\lambda}))^{\top}$,
then ${\rm CP}_{\lambda}$ has the following two properties in Theorems \ref{TH5}-\ref{TH6}.

\begin{theorem}\label{TH5}
 $y_{\lambda}\in \partial \mathcal{Y}$.
\end{theorem}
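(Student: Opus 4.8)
The plan is to show that $y_{\lambda}$ supports a nonzero linear functional on $\mathcal{Y}$, which will force it onto the boundary. The correct functional can be read directly off the objective of ${\rm CP}_{\lambda}$: set $c:=(1,-2\lambda_1,\dots,-2\lambda_r)^{\top}\in\mathbb{R}^{r+1}$, a vector that is automatically nonzero because its leading entry is $1$. First I would verify that $y_{\lambda}\in\mathcal{Y}$. Taking $x=x_{\lambda}$ in the description of $\mathcal{Y}$, we have $y_{\lambda}=(f_0(x_{\lambda}),f_1(x_{\lambda}),\dots,f_r(x_{\lambda}))^{\top}$ with $(y_{\lambda})_i=f_i(x_{\lambda})$ for $i=1,\dots,r$ and $(y_{\lambda})_0=f_0(x_{\lambda})$; since $y_0'\geq\max_{x\in\mathcal{X}}f_0(x)\geq f_0(x_{\lambda})$, the bound $f_0(x_{\lambda})\leq(y_{\lambda})_0\leq y_0'$ holds, so $y_{\lambda}\in\mathcal{Y}$.

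The core of the argument is the estimate $c^{\top}y\geq c^{\top}y_{\lambda}$ for every $y\in\mathcal{Y}$. Given such a $y$, pick $x\in\mathcal{X}$ with $f_0(x)\leq y_0\leq y_0'$ and $y_i=f_i(x)$ for $i=1,\dots,r$. Then
$$c^{\top}y=y_0-2\sum_{i=1}^{r}\lambda_i y_i=y_0-2\sum_{i=1}^{r}\lambda_i f_i(x)\geq f_0(x)-2\sum_{i=1}^{r}\lambda_i f_i(x),$$
where the inequality uses only $y_0\geq f_0(x)$ together with the fact that the coefficient of $y_0$ in $c$ is $+1$. Because $x$ is feasible for ${\rm CP}_{\lambda}$ and $x_{\lambda}$ is its minimizer, the right-hand side is at least $f_0(x_{\lambda})-2\sum_{i=1}^{r}\lambda_i f_i(x_{\lambda})=c^{\top}y_{\lambda}$, which completes the chain.

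To finish, I would argue that a point of $\mathcal{Y}$ minimizing a nonzero linear functional over $\mathcal{Y}$ cannot be interior: if $y_{\lambda}\in\operatorname{int}\mathcal{Y}$, then for small $\varepsilon>0$ the point $y_{\lambda}-\varepsilon c$ would still lie in $\mathcal{Y}$ yet satisfy $c^{\top}(y_{\lambda}-\varepsilon c)=c^{\top}y_{\lambda}-\varepsilon\|c\|^2<c^{\top}y_{\lambda}$, contradicting the minimality just established. Hence $y_{\lambda}\notin\operatorname{int}\mathcal{Y}$, and together with $y_{\lambda}\in\mathcal{Y}$ this gives $y_{\lambda}\in\partial\mathcal{Y}$.

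I do not expect a genuine obstacle; the one place that needs care is the sign of the $y_0$-term. Since its coefficient in $c$ is positive, lowering $y_0$ to $f_0(x)$ can only decrease $c^{\top}y$, and this is precisely what lets the inequalities close for arbitrary signs of the $\lambda_i$. Note that convexity of $\mathcal{Y}$ is not needed for this direction, so I would keep the final step elementary rather than invoking the supporting hyperplane theorem.
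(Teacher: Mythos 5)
Your proof is correct and follows essentially the same route as the paper: both reduce ${\rm CP}_{\lambda}$ to minimizing the linear functional $y_0-2\sum_{i=1}^{r}\lambda_i y_i$ over $\mathcal{Y}$ (the paper's problem (\ref{ZJEQ1})) and conclude that the minimizer of a nonzero linear functional must lie on the boundary. The only difference is that you spell out what the paper treats as immediate---the membership $y_{\lambda}\in\mathcal{Y}$, the inequality that absorbs $y_0\geq f_0(x)$, and the $\varepsilon$-perturbation argument replacing the bare assertion that optimality forces $y_{\lambda}\in\partial\mathcal{Y}$---which is a tightening of the same argument rather than a new idea.
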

\begin{proof}
Since ${\rm CP}_{\lambda}$ is equivalent to the following convex optimization problem  with a linear objective function  with respect to  $y$:
\begin{equation}\label{ZJEQ1}
\min\limits_{y\in \mathcal{Y}}~\left\{y_0-2\sum_{i=1}^{r}\lambda_iy_i\right\},\end{equation}
 $y_{\lambda}$ is clearly  optimal for problem (\ref{ZJEQ1}). As a result, $y_{\lambda}$  must lie on the boundary of $\mathcal{Y}$, i.e., $y_{\lambda}\in \partial \mathcal{Y}$.   \qed
\end{proof}
\begin{theorem} \label{TH6}
For any $y\in \mathcal{Y}$, it holds that
  \begin{equation}\label{eq14}
\mu_{\lambda}\leq y_0-2\sum_{i=1}^{r}\lambda_iy_i.
\end{equation}
\end{theorem}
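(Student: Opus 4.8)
The plan is to derive the inequality directly from the equivalence already invoked in the proof of Theorem~\ref{TH5}, combined with nothing more than the definition of an optimal value. First I would recall that $\mu_\lambda$ is, by definition, the optimal value of ${\rm CP}_\lambda$, and that the objective of ${\rm CP}_\lambda$ in the variable $x$ is nothing other than the \emph{linear} objective $y_0-2\sum_{i=1}^r\lambda_i y_i$ of problem (\ref{ZJEQ1}), evaluated at $y_i=f_i(x)$ ($i=1,\dots,r$) and $y_0=f_0(x)$. Since $\mathcal{X}$ is bounded and closed and the map $x\mapsto f_0(x)-2\sum_{i=1}^r\lambda_i f_i(x)$ is continuous (in fact convex), the Weierstrass theorem guarantees that this minimum is attained at $x_\lambda$, so that $\mu_\lambda=f_0(x_\lambda)-2\sum_{i=1}^r\lambda_i f_i(x_\lambda)$ is a genuine minimum rather than a mere infimum.

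The core of the argument is to show that $\mu_\lambda$ underestimates the linear objective at every point of the outcome set $\mathcal{Y}$. To this end I would fix an arbitrary $y\in\mathcal{Y}$ and, using the definition of $\mathcal{Y}$ in (\ref{eqj2}), produce a point $x\in\mathcal{X}$ with $f_i(x)=y_i$ for $i=1,\dots,r$ and $f_0(x)\le y_0\le y_0'$. Substituting these relations into the linear objective gives
\[
y_0-2\sum_{i=1}^r\lambda_i y_i
= y_0-2\sum_{i=1}^r\lambda_i f_i(x)
\ge f_0(x)-2\sum_{i=1}^r\lambda_i f_i(x),
\]
where the inequality uses $y_0\ge f_0(x)$ together with the fact that the coefficient of $y_0$ equals $+1>0$. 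The right-hand side is exactly the value of the ${\rm CP}_\lambda$ objective at the feasible point $x$, and is therefore bounded below by its optimal value $\mu_\lambda$.

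Chaining the two estimates yields $y_0-2\sum_{i=1}^r\lambda_i y_i\ge\mu_\lambda$ for the arbitrarily chosen $y\in\mathcal{Y}$, which is precisely (\ref{eq14}). I do not anticipate a real obstacle here: the whole statement is essentially a restatement of the equivalence between ${\rm CP}_\lambda$ and (\ref{ZJEQ1}) plus the definition of the minimum. The single point deserving care is the reduction of the inner minimization over $y_0\in[f_0(x),y_0']$ to the choice $y_0=f_0(x)$, which is legitimate only because the coefficient of $y_0$ in the objective is strictly positive; this is the same observation underlying the equivalence used in Theorem~\ref{TH5}, so it transfers without modification.
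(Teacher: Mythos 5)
Your proof is correct and takes essentially the same route as the paper: the paper's own proof simply invokes the equivalence between ${\rm CP}_{\lambda}$ and problem (\ref{ZJEQ1}) noted in Theorem \ref{TH5}, so that $\mu_{\lambda}$ is the optimal value of (\ref{ZJEQ1}) and the inequality follows from the definition of a minimum. Your version is in fact more careful than the paper's one-line argument, since you explicitly unpack the definition of $\mathcal{Y}$ and justify the reduction $y_0 \ge f_0(x)$ via the positive coefficient of $y_0$, a detail the paper leaves implicit inside the claimed equivalence.
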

\begin{proof}
The proof of Theorem \ref{TH5} shows that $y_{\lambda}$ is  optimal for problem (\ref{ZJEQ1}), and then for any $y\in \mathcal{Y} $,   the inequality (\ref{eq14}) holds.    \qed
\end{proof}

According to Theorems \ref{TH5}-\ref{TH6}, the hyperplane
$$
\mathcal{T}_\lambda:=\left\{y\in \mathbb{R}^{r+1}:
  y_0-2\sum_{i=1}^{r}\lambda_iy_i-\mu_{\lambda}
=0\right\}
$$
employs $y_{\lambda}$ as its support point, which is a support plane of $\mathcal{Y}$.
Set the newly generated supporting semi-infinite space to
$$
\mathcal{T}_\lambda^-:=\left\{y\in \mathbb{R}^{r+1}:
y_0-2\sum_{i=1}^{r}\lambda_iy_i
\geq \mu_{\lambda}\right\},
$$
then $\mathcal{Y} \subset \mathcal{T}_\lambda^-$.
For each $j=1,2,\cdots,r+1$,  we obtain an outer approximation of $\mathcal{Y}$ with respect to the subsimplex $S$ by setting $\lambda=v^j$, which has the following form
$$
\mathcal{T}_S:=\left\{y\in \mathbb{R}^{r+1}:
y_0-2\sum_{i=1}^{r}v_i^jy_i
\geq \mu_{v^j},~j=1,2,\cdots,r+1\right\}.
$$
Further, let $yy=(y_1,y_2,\cdots,y_r)^{\top}$ and $g(yy)=-\sum_{i=1}^r(y_i)^2$, then  the convex envelope of the concave function $g(yy)$ over the simplex $S$ can be constructed as
 \begin{equation}\label{eqjz8}
\underline{g}(yy)=\sum_{j=1}^{r+1}w_jg(v^j),~yy=\sum_{j=1}^{r+1}w_jv^j ,~w\in \triangle_{r}.\end{equation}
After replacing $\mathcal{Y}$  with $\mathcal{T}_S$, removing the inequality constraint $ \underline{y}_0\leq{y}_0\leq \overline{y}_0 $, and employing Eq. (\ref{eqjz8}),  problem  (\ref{eqjz3}) can be relaxed into the following linear program:
 \begin{equation}\label{eqjz7}
 \begin{aligned}
LB(S):=&\min ~y_0+\sum_{j=1}^{r+1}w_jg(v^j)\\
&~{\rm s.t.}~~y_i=\sum_{j=1}^{r+1}w_jv^j_i ,i=1,2,\cdots,r,\\
&~~~~~~~w\in \triangle_{r},~y\in \mathcal{T}_S.
\end{aligned}
 \end{equation}
For each $i=1,2,\cdots,r$, if $y_i=\sum_{j=1}^{r+1}w_jv^j_i$ is brought into every linear semi-infinite constraint of  $\mathcal{T}_S$, it is easy to take the following linear programming problem with fewer variables:
 \begin{equation}\label{fjeqjz7}
 \begin{aligned}
LB(S):=&\min ~y_0+\sum_{j=1}^{r+1}w_jg(v^j)\\
&~{\rm s.t.}~~y_0-2\sum_{i=1}^{r}v_i^j\sum_{s=1}^{r+1}w_sv^s_i
\geq \mu_{v^j},~j=1,2,\cdots,r+1,\\
&~~~~~~~w\in \triangle_{r}.
\end{aligned}
 \end{equation}

 \begin{remark}If $S$ is degenerate, in the algorithm, let $\min\limits_{y\in \bar{S}\cap \mathcal{Y}}\nu(y)=LB(S)=+\infty$.  \end{remark}

%
%

Compared with the problem (\ref{eqj3}), problem (\ref{eqjz7}) not only relaxes the feasible region but also relaxes the objective function in form. Therefore, problem (\ref{eqjz7}) can provide an effective lower bound for the previous problem.
 In addition, the $y_{\lambda}$ constructed by the optimal solution $x_{\lambda}$ of the convex problem ${\rm CP}_{\lambda}$ is employed to update the upper bound on the global optimum of (\ref{eqj3}). In effect, $LB(S)$ is compact enough if the diameter $d(S)$ becomes sufficiently small, i.e.,
\begin{equation}\label{eqjq9} \lim_{d(S)\rightarrow0}\left(\min\limits_{y\in \bar{S}\cap \mathcal{Y}}\nu(y)-LB(S)\right)\rightarrow0,\end{equation}
 which will be proved in the subsequent Theorem \ref{jthe4}.

%

\section{Algorithm}

\subsection{Simplicial  bisection method}\label{ssub4.1}
Derived from the fact that the above convex relaxation is proposed based on the simplex, the branching operation of our algorithm will be performed on the simplex. Thus, the simplex $S_0$ will be subdivided into many sub-simplices.
For a given simplex $S\subseteq S_0$ with $V(S) = \{v^1,\cdots,v^{r+1}\}$ and $E(S) = \{\{v^i, v^j\} : i, j =1,\cdots, r+1,i<j\}$, we now give the branching rule as follows:

i) Choose a $\{v^{\tilde{i}}, v^{\tilde{j}}\}\in\arg\max\{\|v-w\|:\{v, w\}\in E(S)\}$ and set $\eta=\frac{v^{\tilde{i}}+v^{\tilde{j}}}{2}$;

ii) Divide $S$
into
$$
S_{1}=
\left\{\sum_{l=1,l\neq \tilde{j} }^{r+1}w_lv^l+w_{\tilde{j}}\eta\bigg|w\in \triangle_{r}\right\}~
{\rm and}~
 S_{2}=
\left\{\sum_{l=1,l\neq \tilde{i}}^{r+1}w_lv^l+w_{\tilde{i}}\eta \bigg|w\in \triangle_{r}\right\},
$$
where  $V(S_{1}) = \{v^1_{1},\cdots,v^{r+1}_{1}\}$ with $v^{\tilde{j}}_{1}=\eta$,
$v^l_{1}=v^l$ for $l\in\{1,\cdots,r+1\}\backslash\{\tilde{j}\}$,  $V(S_{2}) = \{v^1_{2},\cdots,v^{r+1}_{2}\}$ with $v^{\tilde{i}}_{2}=\eta$,
$v^l_{2}=v^l$ for $l\in\{1,\cdots,r+1\}\backslash\{\tilde{i}\}$.


From the above simplex partition rule, we know that $S_{1}\cap S_{2}={\rm rbd}(S_{1})\cap {\rm rbd}(S_{2})$, $S_{1}\cup S_{2}=S$ and $S_{s}\subset S,s=1,2$, where ${\rm rbd}(S_{1})$ denotes the relative boundary
of  $S_{1}$.
Besides, this simplex partition rule follows the longest edge partitioning rule, so it generates an exhaustive nested partition of a given simplex in the limit  sense \cite{Dickinsonhapter3}. Thus, the global convergence of the B\&B algorithm is guaranteed.
\subsection{Simplicial branch-and-bound algorithm}
\begin{algorithm}[h!]
\textbf{Algorithm}~(OSSBBA)\\
\textbf{Step 0 (Initialization).}

Given a  tolerance $\epsilon>0$. Calculate $\underline{y}_i:=\min\limits_{x\in \mathcal{X}}f_i(x)$ and $\overline{y}_i:=\max\limits_{x\in \mathcal{X}}f_i(x)$, $i=1,\cdots,r$  for  (\ref{eqj2}).

 Initialize the simplex $S_0$ with a set  $V(S_0)=\{v^1_{0},\cdots,v^{r+1}_{0}\}$ of vertices, where $v^1_{0}=\underline{yy}:=(\underline{y}_1,\cdots,\underline{y}_r)^{\top}$, $v^{i+1}_{0}=(\underline{y}_1,\cdots,\underline{y}_i+r(\overline{y}_i-\underline{y}_i),\cdots,\underline{y}_r)$, $i=1,2,\cdots,r$.
  Clearly, $S_0$ covers $\mathcal{H}$.

For each $j=1,2,\cdots,r+1$, solve the convex programming problem ${\rm CP}_{\lambda}$ with $\lambda=v^j_{0}$, and obtain the optimal solution $x_{v^j_{0}}$ and the optimal value $\mu_{v^j_{0}}$.  Set $\mu_{S_0}:=(\mu_{v^1_{0}},\mu_{v^2_{0}},\cdots,\mu_{v^{r+1}_{0}})$

Compute $x^*=\arg\min\{\phi(x_{v^j_{0}}):i=1,2,\cdots,r+1\}$, $y^*:=(f_0(x_{*}), \cdots,f_r(x_{*}))^{\top}$ and let $UB^0=\nu(y^*)=\phi(x^*)$  be the initial upper bound on the optimal value of problem (\ref{eqj3}).

Solve the linear program (\ref{fjeqjz7}) to obtain an initial lower bound $LB^0=LB(S_0)$ on the optimal value of (\ref{eqj3}).
 Set $\Xi=\{[S_0,\mu_{S_0},E(S_0),LB(S_0)]\}$, $\epsilon>0$,
   $k:=0$.\\
\textbf{Step 1 (Termination).}\\
If $UB^k-LB^k\leq \epsilon$, terminate and output $x^*$ and $y^*$.\\
\textbf{Step 2. (Simplicial subdivision).}

Choose a $\{v^{\tilde{i}}_k, v^{\tilde{j}}_k\}\in\arg\max\{\|v-w\|:\{v, w\}\in E(S_k)\}$ and set $\eta=\frac{v^{\tilde{i}}_k+v^{\tilde{j}}_k}{2}$.

By using the simplicial bisection method shown in Sect. \ref{ssub4.1}, the simplex $S_k$ is divided into two sub-simplices $S_{k1}$ and $S_{k2}$ with $V(S_{k1})=\{v^1_{k1},\cdots,v^{r+1}_{k1}\}$  and $V(S_{k2})=\{v^1_{k2},\cdots,v^{r+1}_{k2}\}$, where
 $v^{\tilde{j}}_{k1}=\eta$, $v^j_{k1}=v^j_{k}$, $j\in\{1,2,\cdots,r+1\}\backslash\{\tilde{j}\}$,
$v^{\tilde{i}}_{k2}=\eta$, $v^i_{k2}=t^i_{k}$, $i\in\{1,2,\cdots,r+1\}\backslash\{\tilde{i}\}$.
\\
\textbf{Step 3. (Determine the upper bound).}

Solve the convex  problem ${\rm CP}_{\lambda}$ with $\lambda=\eta$  and obtain the optimal solution $x_{\eta}$ and the optimal value $\mu_{\eta}$.

Set $\mu_{S_{k1}}=(\mu_{v^1_{k1}},\mu_{v^2_{k1}},\cdots,\mu_{v^{r+1}_{k1}})$ with
$\mu_{v^{\tilde{j}}_{k1}}=\mu_{\eta}$, $\mu_{v^j_{k1}}=\mu_{v^j_{k}}$, $j\in\{1,2,\cdots,r+1\}\backslash\{\tilde{j}\}$, $\mu_{S_{k2}}=(\mu_{v^1_{k2}},\mu_{v^2_{k2}},\cdots,\mu_{v^{r+1}_{k2}})$ with $\mu_{v^{\tilde{i}}_{k2}}=\mu_{\eta}$, $\mu_{v^i_{k2}}=\mu_{v^i_{k}}$,  $i\in\{1,2,\cdots,r+1\}\backslash\{\tilde{i}\}$.

Set $y_\eta=(f_0(x_{\eta}), \cdots,f_r(x_{\eta}))^{\top}$, $U^k=\nu(\eta)$, if $U^k<UB^k$, set $UB^k=U^k$, $y^*=y_\eta$, $x^*=x_\eta$.\\
\textbf{Step 4. (Simplicial delete).}

Compute $LB(S_{k1})$ and $LB(S_{k2})$ by addressing  the linear program (\ref{fjeqjz7}) with $S=S_{k1},S_{k2}$.

Set~$\Xi:=\Xi\backslash \{[S_k,\mu_{S_k},E(S_k),LB(S_k)]\} \cup \{[S_{k1},\mu_{S_{k1}},E(S_{k1}),LB(S_{k1})],[S_{k2},\mu_{S_{k2}}$, $E(S_{k2}),LB(S_{k2})]\}$.

For each $s=1,2$, if $UB^k-LB(S_{ks})\leq\epsilon$, set $\Xi:=\Xi\backslash  \{[S_{ks},\mu_{S_{ks}}$, $E(S_{ks}),LB(S_{ks})]\}$.\\
\textbf{Step 5. (Determine the lower bound).}

If $\Xi\neq\emptyset$,
set $LB^{k}:=\min\{LB(S):[\cdot,\cdot,\cdot,LB(S)]\in\Xi\}
$ and goto Step 6, otherwise stop and output $t^*$, $k=k+1$.\\
\textbf{Step 6 (Select the simplex).}

Choose an element $\{[S_{k},\mu_{S_{k}}$, $E(S_{k}),LB(S_{k})]\}\in  \Xi$ such that  $LB(S_{k})=LB^k$.

Set $k:=k+1$ and return to Step 1.
\end{algorithm}

In the above algorithm, $k$ is adopted as the iteration index. At each iteration, one subproblem is selected and up to two new subproblems are created to replace the old one. The optimal value of the new subproblem will gradually improve compared with the old subproblem, and the corresponding upper and lower bounds will be updated, so that the gap between the upper and lower bounds will gradually decrease.
Indeed, for a given tolerance $\epsilon>0$, OSSBBA can output a global $\epsilon$-optimal solution for problem (\ref{eqj3}). Here, we call $t^*\in S_0$ a global $\epsilon$-optimal solution to problem (\ref{eqj3})  if $\nu(y^*)\leq \min\limits_{y\in \mathcal{Y}}\nu(y)+\epsilon$.

\begin{theorem}\label{jthe4}
Given a tolerance $\epsilon>0$, when {\rm OSSBBA} runs to Step 1 of the $k$th iteration, if the subproblem $\{[S_k,\mu_{S_{k}},E(S_k),LB(S_k)]\}$ satisfies $d(S)\leq \epsilon/(4d_S)$ with $d_S=\max\{\|v_k^i\||i=1,\cdots,r+1\}$, the algorithm must terminate and output a global $\epsilon$-optimal solution to problem (\ref{eqj3}). Besides, $ \min\limits_{y\in S}\nu(y)-LB(S_k)\leq\epsilon.$
 \end{theorem}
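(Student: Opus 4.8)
The plan is to prove the quantitative estimate $\min_{y\in\bar S_k\cap\mathcal Y}\nu(y)-LB(S_k)\le\epsilon$ first, since the termination claim and global $\epsilon$-optimality follow from it together with the standard branch-and-bound bookkeeping. I would begin by recording that $LB(S_k)$ is a genuine lower bound for the subproblem value $V(S_k):=\min_{y\in\bar S_k\cap\mathcal Y}\nu(y)$: every $y\in\mathcal Y\cap\bar S_k$ has $yy=(y_1,\dots,y_r)\in S_k$, hence $yy=\sum_j w_jv^j$ for some $w\in\triangle_r$, and by $\mathcal Y\subset\mathcal T^-_{v^j}$ (Theorems \ref{TH5}--\ref{TH6}) the pair $(y_0,w)$ is feasible for (\ref{fjeqjz7}) with objective $y_0+\underline g(yy)\le y_0+g(yy)=\nu(y)$. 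Thus $LB(S_k)\le V(S_k)$, and the whole difficulty is the reverse, quantitative inequality. I would split the gap into an objective-relaxation part and a feasibility-relaxation part by writing, for the optimizer $(\hat y_0,\hat w)$ of (\ref{fjeqjz7}) with $\hat{yy}=\sum_j\hat w_jv^j$, the decomposition $V(S_k)-LB(S_k)=\big(V(S_k)-\nu(\hat y)\big)+\big(g(\hat{yy})-\underline g(\hat{yy})\big)$, where $\hat y=(\hat y_0,\hat{yy})$ and $\nu(\hat y)=\hat y_0+g(\hat{yy})$.

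The objective-relaxation part is routine and is where the factor $d_S\,d(S)$ enters. Using the elementary variance identity $\sum_jw_ja_j^2-(\sum_jw_ja_j)^2=\tfrac12\sum_{j,l}w_jw_l(a_j-a_l)^2$ coordinatewise, I obtain the exact expression
\[
g(yy)-\underline g(yy)=\tfrac12\sum_{j,l}w_jw_l\|v^j-v^l\|^2\ge 0 ,
\]
valid for every $yy=\sum_jw_jv^j\in S$. Bounding $\|v^j-v^l\|^2\le d(S)\,\|v^j-v^l\|\le 2d_S\,d(S)$ (the first factor by the definition of the diameter, the second by $\|v^j-v^l\|\le\|v^j\|+\|v^l\|\le 2d_S$) and using $\sum_{j,l}w_jw_l=1$ yields $0\le g(yy)-\underline g(yy)\le d_S\,d(S)$ uniformly on $S$. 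In particular the second bracket above is at most $d_S\,d(S)$.

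The main obstacle is the feasibility-relaxation part $V(S_k)-\nu(\hat y)$, i.e.\ showing that the point $\hat y$ produced by the outer approximation cannot sit appreciably below the graph of $\mathcal Y$ over $S_k$. Here I would exploit that each constraint of $\mathcal T_{S_k}$ is a supporting hyperplane of $\mathcal Y$ that is tight at the outcome point $y_{v^j}\in\partial\mathcal Y$ (Theorem \ref{TH5}), and that $\nu$ is Lipschitz on $\bar S_k$ with modulus controlled by $\|\nabla\nu\|=\|(1,-2\,yy)\|\le 1+2d_S$. The plan is to use the $r+1$ active support constraints, together with the closeness of the touch points $y_{v^j}$ when $d(S_k)$ is small, to produce a genuinely feasible $\tilde y\in\mathcal Y\cap\bar S_k$ with $\|\tilde y-\hat y\|=O(d(S_k))$ and hence $V(S_k)\le\nu(\tilde y)\le\nu(\hat y)+3d_S\,d(S)$. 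Quantifying the displacement $\|\tilde y-\hat y\|$ is the delicate point: it rests on the stability (continuity) of the solution map $\lambda\mapsto y_\lambda$ of ${\rm CP}_\lambda$ and on the convexity of $\mathcal Y$, which force the finite intersection of the support planes to tighten onto $\partial\mathcal Y$ at rate $O(d(S_k))$. Granting this, adding the two parts gives $V(S_k)-LB(S_k)\le 4d_S\,d(S)\le\epsilon$ under the hypothesis $d(S)\le\epsilon/(4d_S)$.

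Finally I would close the argument. Since $S_k$ is the simplex selected in Step 6, $LB^k=LB(S_k)$, while the incumbent satisfies $UB^k\le\nu(y_{v^j})$ for the vertex-outcome points $y_{v^j}$ of $S_k$ evaluated in Steps 0 and 3; the same asymptotic-tightness estimate then forces $UB^k-LB^k\le\epsilon$, so the test in Step 1 fires and the algorithm stops. Global $\epsilon$-optimality of the returned $(x^*,y^*)$ is immediate from $LB^k\le\min_{y\in\mathcal Y}\nu(y)\le UB^k=\nu(y^*)$, which is the standard lower/upper sandwich of the branch-and-bound scheme. I expect essentially all of the real work to be in the feasibility bound of the previous paragraph; the envelope estimate and the bookkeeping are straightforward.
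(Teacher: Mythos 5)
Your proposal has a genuine gap at exactly the step you yourself flag as ``the delicate point,'' and it cannot be granted: the bound on the feasibility-relaxation part $\min_{y\in\bar S_k\cap\mathcal Y}\nu(y)-\nu(\hat y)=O(d_S\,d(S))$ is never proved, and the route you sketch for it is unsound. The solution map $\lambda\mapsto y_\lambda$ of ${\rm CP}_{\lambda}$ need not be single-valued, continuous, or Lipschitz (if $\partial\mathcal Y$ contains flat pieces, the support point can jump as $\lambda$ varies), so ``stability of the solution map'' cannot deliver an $O(d(S_k))$ displacement estimate; moreover, supporting hyperplanes of a convex set with nearby normals can touch it at arbitrarily distant points, so the intersection of the $r+1$ half-spaces in $\mathcal T_{S_k}$ has no a priori $O(d(S_k))$ tightening rate onto $\partial\mathcal Y$. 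There is also a second wiring problem at the end: even granting your estimate for $\min_{y\in\bar S_k\cap\mathcal Y}\nu(y)-LB(S_k)$, the termination test needs $UB^k-LB^k\le\epsilon$, and this does not follow, because the incumbent only satisfies $UB^k\le\tilde\nu:=\min_j\nu(y_{v_k^j})$, and neither $UB^k\le\min_{y\in\bar S_k\cap\mathcal Y}\nu(y)$ nor $\tilde\nu\le\min_{y\in\bar S_k\cap\mathcal Y}\nu(y)+O(d(S))$ is available --- the vertex outcome points $y_{v_k^j}$ need not lie in $\bar S_k$ at all. Your appeal to ``the same asymptotic-tightness estimate'' at this point is circular.

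The paper's proof avoids both difficulties by never attempting to place a feasible point of $\mathcal Y$ near the relaxed optimizer. It exploits the algebraic tightness of each constraint of $\mathcal T_{S_k}$ at its own outcome point, namely $\mu_{v^j}=f_0(x_{v^j})-2\sum_i v_i^j f_i(x_{v^j})$, substitutes this into the constraints evaluated at the LP optimizer $(\tilde y_0,\tilde w)$ of (\ref{fjeqjz7}), and completes the square so that the unknown touch points $y_{v^j}$ enter only through $\nu(y_{v^j})+\|y_{v^j}-v^j\|^2\ge\tilde\nu$; this is the chain (\ref{eqj7}). Combined with the elementary norm estimate (\ref{eqj9}), this yields $LB(S_k)\ge\tilde\nu-4d_Sd(S)$, and since the bookkeeping of Steps 0 and 3 guarantees $UB^k\le\tilde\nu$, one obtains $UB^k-LB^k\le 4d_Sd(S)\le\epsilon$ directly; global $\epsilon$-optimality then follows from the sandwich $LB^k\le\min_{y\in\mathcal Y}\nu(y)\le UB^k=\nu(y^*)$. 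Your envelope estimate (the variance identity giving $g-\underline g\le d_Sd(S)$ on $S$) is correct and corresponds to part of the paper's computation, but the heart of the theorem is carried by the tightness/completion-of-squares argument, which your plan replaces with an unproven --- and, as stated, unprovable --- stability claim.
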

\begin{proof} For convenience, after ignoring the index $k$, let $S=S_k$, $LB=LB^k$, $UB=UB^k$ and $v^j=v_k^j$, $j=1,\cdots,r+1$.
 Let  $\tilde{\nu} =\min\{y_{v^j_0}-\|y_{v^j}\|^2:j=1,\cdots,r+1\}$,
 and then from (\ref{eqjz7}) we have
\begin{equation}\label{eqj7}  \begin{aligned}
LB(S)&= \tilde{y}_0+\sum_{j=1}^{r+1}\tilde{w}_jg(v^j)\\
&\geq \max_{1\leq j\leq r+1} \left\{2\sum_{i=1}^{r}v_i^j\tilde{y}_i
+\mu_{v^j}\right\} -\sum_{j=1}^{r+1}\tilde{w}_j \|v^j \|^2\\
&\geq \sum_{j=1}^{r+1}\tilde{w}_j\left(2\sum_{i=1}^{r}v_i^j\tilde{y}_i
+\mu_{v^j}\right) -\sum_{j=1}^{r+1}\tilde{w}_j \|v^j \|^2\\
&= \sum_{j=1}^{r+1}\tilde{w}_j\left(2\sum_{i=1}^{r}v_i^j\tilde{y}_i
+y_{v^j_0}-2\sum_{i=1}^{r}v_i^jy_{v^j_i}- \|v^j \|^2\right)\\
&= \sum_{j=1}^{r+1}\tilde{w}_j\left(y_{v^j_0}-\|y_{v^j}\|^2+2\sum_{i=1}^{r}v_i^j(\tilde{y}_i-v^j_i)
+ \|y_{v^j}- v^j \|^2\right)\\
&\geq \sum_{j=1}^{r+1}w_j\left(y_{v^j_0}-\|y_{v^j}\|^2+2\sum_{i=1}^{r}v_i^j(\tilde{y}_i-v^j_i)\right)\\
&\geq\tilde{\nu} +2\sum_{j=1}^{r+1}\tilde{w}_j\left(\left\|\widetilde{yy}\right\|^2-\|v^j\|^2\right),
\end{aligned}\end{equation}
 where  $\widetilde{yy}=(\tilde{y}_1,\tilde{y}_2,\cdots,\tilde{y}_r)\in S$  and $(\tilde{y}_0,\tilde{yy},\tilde{w})$ is an optimal solution of the  problem (\ref{eqjz7}).
Besides, for all $yy=(y_1,y_2,\cdots,y_r)\in S$, it holds  that
\begin{equation}\label{eqj9}  \|v^j\|^2-\left\|yy\right\|^2  =(\|v^j\|+\|yy\|)(\|v^j\|-\|yy\|)\leq2d_S(\|v^j-yy\|)\leq 2d_Sd(S).\end{equation}
Also, since $LB=LB(S)$ is the least lower bound at the current iteration, we have $\tilde{\nu}\geq UB\geq \min\limits_{y\in \bar{S}\cap\mathcal{Y}}\nu(y)\geq LB=LB(S)$, where  $\bar{S}=[\underline{y}_0, \overline{y}_0]\times S$.
Thus, it follows from Eqs.  (\ref{eqj7})-(\ref{eqj9}) that
\begin{equation}\label{eqj1410}\min\limits_{y\in \bar{S}\cap\mathcal{Y} }\nu(y)-LB(S)\leq UB-LB\leq 2\sum_{j=1}^{r+1}\tilde{w}_j\left(\|v^j\|^2-\left\|\widetilde{yy}\right\|^2\right)\leq 4d_Sd(S).\end{equation}
And when $d(S)\leq \epsilon/(4d_S)$ is satisfied, we have
\begin{equation}\label{eqj110} \min\limits_{y\in \bar{S}\cap\mathcal{Y} }\nu(y)-LB(S)\leq UB-LB\leq\epsilon.\end{equation}
Besides,  it knows from the iterative mechanism of OSSBBA that $LB\leq\min\limits_{y\in \mathcal{Y}}\nu(y)\leq UB=\nu(y^*)$.
Then by combining Eq. (\ref{eqj110}), it follows that
$$\nu(y^*)= UB \leq  LB +\epsilon\leq \min\limits_{y\in \mathcal{Y}}\nu(y)+\epsilon,$$ which means that $y^*$ is a global $\epsilon$-optimal solution to (\ref{eqj3}).
\end{proof}
\begin{remark}\label{REK1}From Theorem \ref{jthe4},  we know that
Eq. (\ref{eqj1410}) implies that Eq. (\ref{eqjq9}) holds.\end{remark}

 Theorem \ref{jthe4} and Remark \ref{REK1} show  that the optimal value of problem (\ref{eqj3}) over each subsimplex and that of its relaxation problem (\ref{eqjz7}) are gradually approaching in the limiting sense. This  implies that the bounding and branching operations are consistent, so the B\&B algorithm is theoretically globally convergent.


Now, let us analyze the complexity of OSSBBA based on Theorem \ref{jthe4} and the iterative mechanism of this algorithm.
\begin{lemma} \label{leme5}
 For any $yy=({y}_1,{y}_2,\cdots,{y}_r)\in S_0$  with  $V(S_0)=\{v^1_{0},\cdots,v^{r+1}_{0}\}$, it holds that \begin{equation}\label{jdEQ01q}\|yy\|\leq\bar{d}:=\max\left\{\sqrt{ \sum_{i=1,i\neq s}^r(\underline{y}_{i})^2+
(\underline{y}_{s}+r(\overline{y}_{s}-\underline{y}_{s}))^2}:i=1,\cdots,r\right\},\end{equation}
where  $\overline{y}_i$ and $\underline{y}_i$ is given in {\rm Step 0} of {\rm OSSBBA}.
\end{lemma}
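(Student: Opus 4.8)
The plan is to exploit the simplex structure of $S_0$ together with the convexity of the Euclidean norm. First I would write an arbitrary point of $S_0$ in barycentric coordinates: since $S_0$ is the $r$-simplex with vertex set $V(S_0)=\{v^1_0,\dots,v^{r+1}_0\}$, every $yy\in S_0$ admits a representation $yy=\sum_{j=1}^{r+1}w_jv^j_0$ with $w\in\triangle_r$. Because $\|\cdot\|$ is convex and positively homogeneous, the triangle inequality gives
$$\|yy\|=\Bigl\|\sum_{j=1}^{r+1}w_jv^j_0\Bigr\|\le\sum_{j=1}^{r+1}w_j\|v^j_0\|\le\max_{1\le j\le r+1}\|v^j_0\|,$$
so the whole lemma reduces to evaluating the vertex norms and showing that the largest one equals $\bar d$. (Equivalently, one may invoke that the convex function $\|\cdot\|^2$ attains its maximum over the polytope $S_0$ at an extreme point.)

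Next I would read off the vertex norms from the data fixed in Step 0 of OSSBBA. By construction $v^1_0=(\underline y_1,\dots,\underline y_r)^\top$, while for each $s\in\{1,\dots,r\}$ the vertex $v^{s+1}_0$ coincides with $v^1_0$ except in its $s$-th coordinate, where it takes the value $\underline y_s+r(\overline y_s-\underline y_s)$. Hence
$$\|v^{s+1}_0\|^2=\sum_{i=1,\,i\ne s}^{r}(\underline y_i)^2+\bigl(\underline y_s+r(\overline y_s-\underline y_s)\bigr)^2,\qquad s=1,\dots,r,$$
which shows precisely that $\bar d=\max_{1\le s\le r}\|v^{s+1}_0\|$. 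Thus it only remains to verify that the one omitted vertex $v^1_0$ does not exceed this maximum, i.e. that $\|v^1_0\|\le\bar d$; combined with the displayed bound this yields $\|yy\|\le\max_{1\le j\le r+1}\|v^j_0\|=\bar d$.

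The main obstacle is exactly this last comparison. Subtracting, for each $s$,
$$\|v^{s+1}_0\|^2-\|v^1_0\|^2=\bigl(\underline y_s+r(\overline y_s-\underline y_s)\bigr)^2-(\underline y_s)^2,$$
and since $\overline y_s\ge\underline y_s$ the shift $r(\overline y_s-\underline y_s)$ is nonnegative, so this difference is nonnegative whenever the stretched coordinate $\underline y_s+r(\overline y_s-\underline y_s)$ is at least as far from the origin as $\underline y_s$ (in particular whenever $\underline y_s\ge 0$). Choosing an index $s$ that maximizes $\|v^{s+1}_0\|^2-\|v^1_0\|^2$ then gives $\|v^1_0\|\le\|v^{s+1}_0\|\le\bar d$, closing the argument. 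I expect the delicate point to be establishing nonnegativity of this increment in full generality; I would handle it by invoking the nondegeneracy of $S_0$ (so that $\overline y_s>\underline y_s$ for every $s$) together with the available sign information on the outcome-space bounds $\underline y_s$, or, failing a uniform sign, by recording $\max_{1\le s\le r}\|v^{s+1}_0\|\ge\|v^1_0\|$ as the exact condition under which $\bar d$ is a valid global bound on $\|yy\|$ over $S_0$.
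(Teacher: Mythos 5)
Your reduction is essentially the paper's own argument: the paper likewise passes to the vertices of $S_0$ (invoking that the convex function $\|\cdot\|$ attains its maximum over the polytope $S_0$ at a vertex, which is the same device you mention parenthetically and replace by the barycentric/triangle-inequality computation), and then evaluates
$$\|v_0^{s+1}\|^2=\sum_{i=1,i\neq s}^r(\underline{y}_{i})^2+\bigl(\underline{y}_{s}+r(\overline{y}_{s}-\underline{y}_{s})\bigr)^2,\qquad s=1,\cdots,r,$$
exactly as you do. Up to that point the two proofs coincide.

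The ``delicate point'' you flag at the end is a genuine gap, and you should know that the paper does not close it either: its proof simply writes
$$\|v_0^{1}\|\leq\|yy\|\leq \max\limits_{yy\in S_0}\|yy\|=\max\{\|v_0^{s+1}\|:s=1,\cdots,r\},$$
i.e.\ it drops the vertex $v_0^1$ from the vertex maximum without any justification (and in passing asserts $\|v_0^1\|\leq\|yy\|$ for all $yy\in S_0$, which is also false in general). Your analysis of why this needs a sign condition is correct, and no such condition is available here: $\underline{y}_s=\min_{x\in\mathcal{X}}f_s(x)$ with $f_s(x)=C_s^{\top}x$ carries no sign guarantee. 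In fact the lemma as stated can fail. Take $r=1$ and $[\underline{y}_1,\overline{y}_1]=[-10,-9]$; then $S_0=[-10,-9]$, $\bar{d}=|\overline{y}_1|=9$, yet $yy=-10\in S_0$ has $\|yy\|=10>\bar{d}$. So your proposed fallback is exactly the right repair: either enlarge $\bar{d}$ to $\max\{\|v_0^j\|:j=1,\cdots,r+1\}$ (equivalently, include the term $\sqrt{\sum_{i=1}^r(\underline{y}_i)^2}$ in the maximum), in which case your first display already completes the proof, or add an explicit hypothesis guaranteeing $\|v_0^1\|\leq\max_{1\leq s\leq r}\|v_0^{s+1}\|$ (for instance $\underline{y}_s\geq 0$ for some $s$). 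With $\bar{d}$ redefined in this way, the only downstream use of Lemma~\ref{leme5} --- bounding $d_{S_k}=\max_i\|v_k^i\|$ by $\bar{d}$ in the complexity result (Theorem~\ref{jthe5}) --- goes through unchanged.
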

\begin{proof}
By considering the convex maximization problem $\max\limits_{yy\in S_0}\|yy\|$, we know that its optimal solution lies at a certain vertex of $S_0$. It follows that
\begin{equation}\label{jdEQ1q}\|v_0^{1}\|\leq\|yy\|\leq \max\limits_{yy\in S_0}\|yy\|=\max\{\|v_0^{s+1}\|:s=1,\cdots,r\}.\end{equation}
For each $s=1,\cdots,r$, it can be derived that
$$\|v_0^{s+1}\|^2=\sum_{i=1,i\neq s}^r(\underline{y}_{i})^2+
(\underline{y}_{s}+r(\overline{y}_{s}-\underline{y}_{s}))^2.
$$
By combining Eq. (\ref{jdEQ1q}) it concludes that Eq. (\ref{jdEQ01q}) is true. This achieves the proof.

\end{proof}

\begin{theorem} \label{jthe5}
Given a tolerance $\epsilon>0$, the maximum number of iterations required by {\rm OSSBBA} to obtain a global $\epsilon$-optimal solution for {\rm QCQP} is
$$\left\lfloor\frac{\prod_{i=1}^r(\overline{y}_i-\underline{y}_i)}{\sqrt{r+1}}\left(\frac{8\sqrt{2}r\bar{d}}{\epsilon}\right)^r\right\rfloor,$$
where
$\bar{d}=\max\left\{\sqrt{ \sum_{i=1,i\neq s}^r(\underline{y}_{i})^2+
(\underline{y}_{s}+r(\overline{y}_{s}-\underline{y}_{s}))^2}:i=1,\cdots,r\right\}.$
\end{theorem}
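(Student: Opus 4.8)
The plan is to reduce the termination criterion of Theorem \ref{jthe4} to a pure diameter bound, and then to count the simplices produced by the bisection process through a volume argument.

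First I would combine Theorem \ref{jthe4} with Lemma \ref{leme5}. For any subsimplex $S\subseteq S_0$ every vertex $v^i$ lies in $S_0$, so Lemma \ref{leme5} gives $d_S=\max_i\|v^i\|\le\bar d$, whence $\epsilon/(4d_S)\ge\epsilon/(4\bar d)$. Setting $\delta:=\epsilon/(4\bar d)$, any simplex with $d(S)\le\delta$ already satisfies the stopping condition $d(S)\le\epsilon/(4d_S)$. Hence in the worst case OSSBBA subdivides exactly those simplices whose diameter exceeds $\delta$, and it suffices to bound the number of such subdivisions. Next I would use that the branching of Section \ref{ssub4.1} is a midpoint bisection: since $\eta=(v^{\tilde i}+v^{\tilde j})/2$ is the midpoint of the split edge, replacing $v^{\tilde j}$ (resp. $v^{\tilde i}$) by $\eta$ scales the determinant defining the volume by $1/2$, so each bisection splits a simplex into two subsimplices of equal volume. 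Consequently the leaves of the resulting binary subdivision tree form an interior-disjoint cover of $S_0$ whose volumes sum to $V(S_0)=\frac{r^r}{r!}\prod_{i=1}^r(\overline{y}_i-\underline{y}_i)$ (the orthoscheme with mutually orthogonal legs $r(\overline{y}_i-\underline{y}_i)$). Because every internal node of a full binary tree has two children, the number of iterations equals the number of leaves minus one, so the task becomes an upper bound on the number of leaves.

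The crux — and the step I expect to be the main obstacle — is a uniform lower bound on the volume of each leaf. A leaf $S$ has $d(S)\le\delta$, while its parent $P$ has $d(P)>\delta$; since the longest edge of $P$ (of length $d(P)$) is halved and the resulting half-edge survives in $S$, one deduces $\delta<d(P)\le2\delta$ and that $S$ retains an edge of length $>\delta/2$. To turn this into a volume estimate I would invoke the non-degeneracy of longest-edge bisection, so that the generated simplices do not flatten, obtaining $\mathrm{vol}(S)\ge V_{\delta/2}=V_\delta/2^r$, where $V_\delta=\frac{\delta^r}{r!}\sqrt{(r+1)/2^r}$ is the volume of the regular $r$-simplex of edge $\delta$. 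Controlling this minimal volume uniformly over all descendants of $S_0$ is the delicate point, since aspect ratios under simplicial bisection are only known to stay bounded rather than constant; this is where the argument carries its real weight.

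Finally I would assemble the count. With $\mathrm{vol}(S)\ge V_\delta/2^r$ on every leaf and $\sum_{\text{leaves}}\mathrm{vol}(S)=V(S_0)$, the number of leaves is at most $2^r V(S_0)/V_\delta$, hence the number of iterations is at most $2^rV(S_0)/V_\delta-1$. Substituting $V(S_0)$, $V_\delta$ and $\delta=\epsilon/(4\bar d)$ and collecting the powers of two (namely $2^{r}\cdot 2^{2r}\cdot 2^{r/2}=(8\sqrt2)^r$) collapses the expression to $\frac{\prod_{i=1}^r(\overline{y}_i-\underline{y}_i)}{\sqrt{r+1}}\bigl(\tfrac{8\sqrt2\,r\bar d}{\epsilon}\bigr)^r$; since the iteration count is an integer strictly below this value, it is bounded by its floor, which yields exactly the stated bound.
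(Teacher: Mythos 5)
Your setup is fine as far as it goes: the reduction to the uniform diameter threshold $\delta=\epsilon/(4\bar d)$ via Lemma~\ref{leme5} is exactly how the paper begins, the observation that midpoint bisection halves volumes (so the leaves of the subdivision tree tile $S_0$ and exceed the iteration count by one) is correct, and your final bookkeeping of powers of two would indeed reproduce the stated constant. The fatal problem is the step you yourself single out as the crux: the uniform lower bound $\mathrm{vol}(S)\ge V_{\delta/2}=V_\delta/2^r$ on every leaf. Possessing an edge of length $>\delta/2$ gives no lower bound whatsoever on volume --- a simplex can have all edges of length about $\delta$ and still be an arbitrarily flat sliver. Worse, the regular $r$-simplex \emph{maximizes} volume among simplices of a given diameter, so your inequality asks every leaf to be within a factor $2^r$ of the largest volume its diameter permits, i.e.\ to be nearly regular. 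Longest-edge bisection provides no such absolute shape regularity in dimension $r\ge3$ (at best, shape quality is controlled relative to the starting simplex), and the starting simplex here is the orthoscheme with legs $r(\overline y_i-\underline y_i)$, which is itself arbitrarily flat whenever the ranges $\overline y_i-\underline y_i$ differ by orders of magnitude; bisection cannot fatten its descendants. Any honest lower bound on leaf volumes must therefore carry a shape constant depending on $S_0$, and with it your expression would no longer collapse to the theorem's constant. As proposed, the proof does not go through.

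It is worth seeing how the paper sidesteps this. It runs the volume comparison in the opposite, unproblematic direction: once every terminal simplex $S_t$ has all edges $\le\delta$, its volume is bounded \emph{above} by the Cayley--Menger volume $Vol(\bar S)$ of the simplex whose edges all equal $\delta$, whence $Vol(S_0)=\sum_t Vol(S_t)\le(k+1)Vol(\bar S)$, and the claimed iteration bound is then read off from $k\ge Vol(S_0)/Vol(\bar S)-1$. Your counting direction --- lower-bounding leaf volumes in order to upper-bound their number --- is in principle the more natural way to cap the number of iterations, but it is precisely the direction that requires the shape-regularity fact you cannot supply; the paper's argument is arranged so as never to need it.
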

\begin{proof}
When OSSBBA terminates, either $k=0$ or $k\geq1$. If $k=0$, the algorithm does not enter the iteration loop.
Thus,
let us talk about the case where the algorithm terminates after many iterations.

At the case of $k\geq1$, it follows from Theorem \ref{jthe4} that $d(S_k)\leq \epsilon/(4d_{S_k})$ is essentially a  sufficient condition for the termination criterion of the algorithm, $UB^k-LB^k\leq\epsilon$, to hold.
Since $v_k^i\in S_k\subseteq S_0$ for $i=1,\cdots,r+1$, it follows from Lemma \ref{leme5}  that $d_{S_k}=\max\limits_{1\leq i\leq r+1}\|v_k^i\| \leq \bar{d}$, which means $d(S_k)\leq \epsilon/(4\bar{d})$ is also a sufficient condition for $UB^k-LB^k\leq\epsilon$.
Further, according to the simplicial branching rule in Step 2, a total of $k+1$ sub-simplices are generated for the initial simplex $S_0$.
For convenience, we denote these subsimplices as $S_1,S_2,\cdots,S_{k+1}$, respectively.
Obviously, $S_0=\bigcup\limits_{t=1}^{k+1}S_t$.
Also, let $V(S_t) = \{v_t^1,\cdots,v_t^{r+1}\}$, $E(S_t) = \{\{v_t^i, v_t^j\} : i, j =1,\cdots, r+1,i<j\}$ for $t=1,\cdots, k+1$.
 In the worst case, suppose that the longest edge of each subsimplex $S_t$ satisfies $d(S_t)\leq \epsilon/(4\bar{d})$. At this point, every edge $\{v_t^i, v_t^j\}$ of $S_t$ satisfies
\begin{equation}\label{EQ12}\|v_t^i-v_t^j\| \leq \epsilon/(4\bar{d}),~i, j =1,\cdots, r+1,i<j,t=1,\cdots, k+1,\end{equation}
which implies that the volume $Vol(S_t)$ of $S_t$ does not exceed the volume $Vol(\bar{S})$ of a $r$-simplex $\bar{S}$  with a unique edge length $\epsilon/(4\bar{d})$.
Thus, we have
\begin{equation}\label{EQ13}Vol(S_0)=\sum\limits_{t=1}^{k+1}Vol(S_t)\leq (k+1)Vol(\bar{S}).\end{equation}
By Cayley-Menger Determinant, it can be calculated that
\begin{equation}\label{EQ14}\begin{aligned}&Vol(S_0)=\sqrt{\frac{(-1)^{r+1}}{2^r(r!)^2}
\det\left(\left|\begin{matrix}
0 & e_{r+1}^T \\
e_{r+1} & \hat{B} \\
\end{matrix}\right|\right)}=\frac{r^r\prod_{i=1}^r(\overline{y}_i-\underline{y}_i)}{r!},\\ &Vol(\bar{S})=\sqrt{\frac{(-1)^{r+1}}{2^r(r!)^2}\det\left(\left|\begin{matrix}
0 & e_{r+1}^T \\
e_{r+1} & \check{B} \\
\end{matrix}\right|\right)}=
\frac{\sqrt{r+1}}{r!}\left(\frac{\epsilon}{8\sqrt{2}\bar{d}}\right)^r,\end{aligned}\end{equation} where   $\hat{B}=(\hat{\beta}_{ij})$ and $\check{B}=(\check{\beta}_{ij})$ denote two $(r+1)\times(r+1)$ matrixes given by $\hat{\beta}_{ij}=\|v_0^i-v_0^j\|^2$, $\check{\beta}_{ij}=\left(\epsilon/(4\bar{d})\right)^2$ for
$i, j =1,\cdots, r+1, i\neq j$, and $\hat{\beta}_{ii}=\check{\beta}_{ii}=0$ for $i =1,\cdots, r+1$. From the properties of the vertices of $S_0$ given in Step 0 of OSSBBA, it follows that $\hat{\beta}_{1j}=\hat{\beta}_{j1}=r^2(\overline{y}_{j-1}-\underline{y}_{j-1})^2$ for $j=2,3,\cdots, r+1$ and $\hat{\beta}_{ij}=\hat{\beta}_{ji}=r^2[(\overline{y}_{i-1}-\underline{y}_{i-1})^2+(\overline{y}_{j-1}-\underline{y}_{j-1})^2]$ for $i, j =2,3,\cdots, r+1, i\neq j$. Next, by combining Eqs. (\ref{EQ13}) and (\ref{EQ14}), we have
$$k\geq \frac{Vol(S_0)}{Vol(\bar{S})}-1=\frac{\prod_{i=1}^r(\overline{y}_i-\underline{y}_i)}{\sqrt{r+1}}\left(\frac{8\sqrt{2}r\bar{d}}{\epsilon}\right)^r-1.$$
However, when Eq. (\ref{EQ12}) holds and $k=\left\lfloor\frac{\prod_{i=1}^r(\overline{y}_i-\underline{y}_i)}{\sqrt{r+1}}\left(\frac{8\sqrt{2}r\bar{d}}{\epsilon}\right)^r\right\rfloor$, these $k+1$ subsimplices must be deleted in Step 4.
Thus, the number of iterations at which OSSBBA terminates is at most $\left\lfloor\frac{\prod_{i=1}^r(\overline{y}_i-\underline{y}_i)}{\sqrt{r+1}}\left(\frac{8\sqrt{2}r\bar{d}}{\epsilon}\right)^r\right\rfloor$. This completes the proof.

 \end{proof}
\begin{remark}
Theorem 3 reveals that when OSSBBA finds a global $\epsilon$-optimal solution for QCQP, the computational time required is at most $$(T_n+2T_r)\left\lfloor\frac{\prod_{i=1}^r(\overline{y}_i-\underline{y}_i)}{\sqrt{r+1}}\left(\frac{8\sqrt{2}r\bar{d}}{\epsilon}\right)^r\right\rfloor+(r+1)T_n$$  seconds, where $T_n$ and $T_r$ denote the upper bounds of the time required to solve a convex quadratic programming problem ${\rm CP}_{\lambda}$ and solve a linear programming problem (\ref{fjeqjz7}), respectively.
\end{remark}
\begin{remark}
Theorem 3 sufficiently guarantees that the algorithm OSSBBA completes termination in a finite number of iterations because of the existence of this most extreme number of iterations.
\end{remark}

\section{Conclusions}
In this paper, we have considered the problem QCQP with a few negative eigenvalues, which is arised from various disciplines, and is known as NP-hard. By embedding the modern outer approximation technique, linear relaxation, and initialization into the branch-and-bound framework, we developed the  OSSBBA algorithm to find the globally optimal solution for QCQP. From the sense of limit, we established the global convergence of OSSBBA.
From the complexity theory, we also noticed that as the number of negative eigenvalues in the Hessian matrix of the objective function increases, the computational cost of OSSBBA shows a rapid growth trend. In addition, compared with the algorithm in \cite{ZBO2023}, OSSBBA does not require parameter adjustment and does not require saving a large number of outer approximation planes. One interesting direction for future research is  to investigate whether we can develop an efficient global algorithm for general quadratic programming problems based on other outer approximation and branch-and-bound techniques.
\begin{acknowledgements}
This research is supported by the National Natural Science Foundation of China under Grant (12301401,11961001),  the Construction Project of first-class subjects in Ningxia higher Education(NXYLXK2017B09) and the Major proprietary funded project of North Minzu University(ZDZX201901).
\end{acknowledgements}

%
%



\end{document}